\def\qed{\hfill {\hbox{${\vcenter{\vbox{               
   \hrule height 0.4pt\hbox{\vrule width 0.4pt height 6pt
   \kern5pt\vrule width 0.4pt}\hrule height 0.4pt}}}$}}}
\def\utr{\, \underline{\triangleright}\, }
\def\otr{\, \overline{\triangleright}\, }
\def\tr{\, \triangleright\, }
\newtheorem{theorem}{Theorem}
\newtheorem{proposition}[theorem]{Proposition}
\theoremstyle{definition}
\newtheorem{example}{Example}
\newtheorem{definition}{Definition}
\newtheorem{remark}{Remark}
\date{}
\title{\Large \textbf{Quandle Cohomology Quiver Representations}}
\author{Sam Nelson\footnote{Email: Sam.Nelson@cmc.edu. Partially supported by Simons Foundation collaboration grant 702597.}}
\begin{document}
\maketitle

\begin{abstract}
We define a family of quiver representation-valued invariants of oriented
classical and virtual knots and links associated to a choice of finite quandle
$X$, abelian group $A$, set of quandle 2-cocycles $C\subset H^2_Q(X;A)$,
choice of coefficient ring $k$ and set of quandle endomorphisms 
$S\subset \mathrm{Hom}(X,X)$. From this representation we define
four new polynomial (or ``polynomial'' depending on $A$) invariants.
We generalize to the case of biquandles and compute some small but illustrative
examples.
\end{abstract}

\parbox{5.5in} {\textsc{Keywords:} Quandle quivers, quandle cohomology,
quiver representations, knot invariants, virtual knot invariants

\smallskip

\textsc{2020 MSC:} 57K12 }

\section{Introduction}\label{I}

\textit{Quandles}, an algebraic structure encoding the Reidemeister moves 
of knot theory analogously to the way groups encode the symmetries of 
geometric spaces, were introduced in \cite{J} and independently in 
\cite{M}.
Quandle cohomology and quandle cocycle invariants of oriented classical and 
virtual knots and links, as well as quandle cocycle invariants of oriented
surface-links, were introduced in \cite{CJKLS} and further developed in 
\cite{CKS} and other work.

Quandle coloring quivers were introduced in \cite{CN} by the author and 
coauthor Karina Cho. Given a finite quandle $X$, a subset 
$S\subset \mathrm{Hom}(X,X)$ of the endomorphism set of $X$ determines 
a quiver-valued invariant of oriented knots and links whose
vertex set may be identified with the quandle homset 
$\mathrm{Hom}(\mathcal{Q}(L),X)$ whose cardinality is the quandle 
counting invariant $\Phi_X^{\mathbb{Z}}(L)$. 
Since quivers are categories, this construction 
categorified the quandle counting invariant, and several new polynomial
knot and link invariants were obtained in \cite{CN} via decategorification.

In \cite{CN2} the quandle coloring quiver was generalized to quandle cocycle 
quivers to categorify the quandle 2-cocycle invariant. Since then, quivers
have been used to categorify many other enhancements of homset invariants
of knots, including quandle module quivers in \cite{IN}, biquandle bracket
quivers in \cite{FN}, psyquandle coloring quivers in \cite{CN0} and many more.

Many popular categorified knot and link invariants, e.g. Khovanov Homology
in its various forms and extensions, start with \textit{quiver 
representations,} i.e.,
assignments of vector spaces or modules to the vertices and linear 
transformations to the edges of a quiver. In this paper we introduce a
quiver representation-valued invariant of oriented classical and virtual
knots and links using quandle cohomology called the \textit{quandle cohomology
quiver representation} invariant. We decategorify to obtain new polynomial
invariants as an application. We generalize slightly to the case of biquandles
and compute some examples.

The paper is organized as follows. In Section \ref{QQQ} we review the basics of 
quandles, quandle (co)homology and quandle coloring quivers. In Section 
\ref{QCQR} we introduce the new structure of quandle cohomology quivers
and compute an example to show the process. In Section \ref{B} we generalize
slightly to the case of biquandles. In Section \ref{EC} we collect some 
examples and computations, and we conclude in Section \ref{Q} with some 
questions for future research.

This paper, including all text, diagrams and computational code,
was produced strictly by the author without the use of generative AI in any 
form.

\section{Quandles, Quandle Cohomology and Quandle Coloring Quivers}\label{QQQ}

We begin with a definition; see \cite{EN,J} for more details.
 
\begin{definition}
A \textit{quandle} is a set $X$ with a binary operation $\tr:X\times X\to X$
satisfying the axioms
\begin{itemize}
\item[(i)] For all $x\in X$ we have $x\tr x=x$,
\item[(ii)] For all $y\in X$, the map $\beta_y:X\to X$ defined by 
$\beta_y(x)=x\tr y$ is invertible, and
\item[(iii)] For all $x,y,z\in X$ we have $(x\tr y)\tr z=(x\tr z)\tr(y\tr z)$.
\end{itemize}
The element $\beta^{-1}_y(x)$ is often denoted $x\tr^{-1}y$, and indeed $X$ is 
a quandle under $\tr^{-1}$ known as the \textit{dual quandle} of $(X,\tr)$.
A quandle in which $\beta_y^{-1}=\beta_y$ for all $y\in X$ is called an
\textit{involutory quandle} or \textit{kei} (圭).
\end{definition}

\begin{example}\label{Ex1}
Standard examples of quandles include 
\begin{itemize}
\item \textit{$n$-fold conjugation quandles}: A group $G$ is a quandle under
$x\tr y=y^{-n}xy^n$ for any choice of $n\in\mathbb{Z}$,
\item \textit{Core quandles}: A group $G$ is a quandle (indeed, a kei) under 
$x\tr y=yx^{-1}y$,
\item \textit{Alexander quandles}: A module $M$ over the ring of Laurent 
polynomials $\Lambda=\mathbb{Z}[t^{\pm 1}]$ is a quandle under 
$\vec{x}\tr\vec{y}=t\vec{x}+(1-t)\vec{y}$ and
\item \textit{Knot quandles}: Given an oriented classical or virtual 
knot or link $L$ represented by a diagram $D$, the \textit{knot quandle} 
of $L$ (also called the \textit{fundamental quandle} of $L$, denoted 
$\mathcal{Q}(L)$) is given by a
presentation with a generator for each arc in $D$ and a relation of the form
$x\tr y=z$ at each crossing in $D$ as shown.
\[\includegraphics{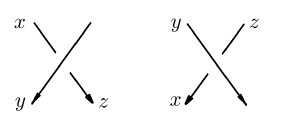}\]
\end{itemize}
\end{example}

\begin{remark}
Joyce \cite{J} shows that the knot quandle determines the knot group system
and hence determines the knot complement up to homeomorphism. For virtual knots
the knot quandle is known not to be a complete invariant \cite{K}.
\end{remark}

\begin{definition}
Let $X,Y$ be quandles. Then a map $f:X\to Y$ is a \textit{quandle homomorphism}
if for all $x,y\in X$ we have
\[f(x\tr y)=f(x)\tr f(y).\]
A quandle homomorphism $f:X\to X$ is an \textit{endomorphism}.
\end{definition}

Let $L$ be an oriented classical or virtual knot or link represented by
a diagram $D$ and let $X$ be a quandle. An assignment of an element of $X$ 
to each of the arcs in $D$ such that at every crossing we have
\[\includegraphics{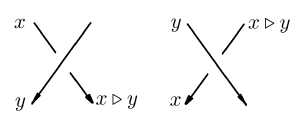}\]
(with colors passing through unchanged at any virtual crossings) is called 
an \textit{$X$-coloring} of $D$. Each $X$-coloring of $D$ determines
a unique homomorphism from the knot quandle $\mathcal{Q}(L)$ to $X$.
In particular, the quandle axioms are chosen so that this set of quandle
homomorphisms is invariant under Reidemeister moves, with the elements of the
abstract homset $\mathrm{Hom}(\mathcal{Q}(L),X)$ represented by $X$-colored
diagrams analogously to matrices encoding linear transformations and with
$X$-colored Reidemeister moves playing the role of change-of-basis matrices.

\begin{example}\label{Ex2}
Consider the (4,2)-torus link $L=L4a1$ and the quandle 
$X=\mathrm{Core}(\mathbb{Z}_4)$, given by the operation table (where we use 
the representative ``4'' for the class of zero so our elements match our 
row/column numbering):
\[\begin{array}{r|rrrr}
\tr & 1 & 2 & 3 & 4\\ \hline
1 & 1 & 3 & 1 & 3 \\
2 & 4 & 2 & 4 & 2 \\
3 & 3 & 1 & 3 & 1\\
4 & 2 & 4 & 2 & 4
\end{array}.\]
The homset $\mathrm{Hom}(\mathcal{Q}(L4a1),\mathrm{Core}(\mathbb{Z}_4))$ 
is then 
\[
\left\{
\raisebox{-0.275in}{\scalebox{0.7}{\includegraphics{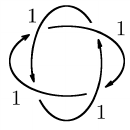}}},\
\raisebox{-0.275in}{\scalebox{0.7}{\includegraphics{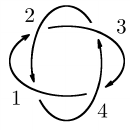}}},\
\raisebox{-0.275in}{\scalebox{0.7}{\includegraphics{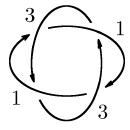}}},\
\raisebox{-0.275in}{\scalebox{0.7}{\includegraphics{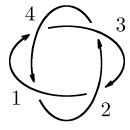}}},\
\raisebox{-0.275in}{\scalebox{0.7}{\includegraphics{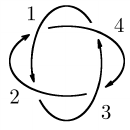}}},\
\raisebox{-0.275in}{\scalebox{0.7}{\includegraphics{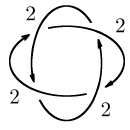}}},\
\raisebox{-0.275in}{\scalebox{0.7}{\includegraphics{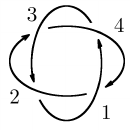}}},\
\raisebox{-0.275in}{\scalebox{0.7}{\includegraphics{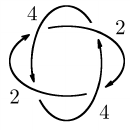}}},\
\right.\] \[\left.
\raisebox{-0.275in}{\scalebox{0.7}{\includegraphics{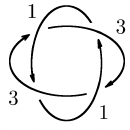}}},\
\raisebox{-0.275in}{\scalebox{0.7}{\includegraphics{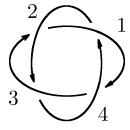}}},\
\raisebox{-0.275in}{\scalebox{0.7}{\includegraphics{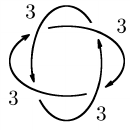}}},\
\raisebox{-0.275in}{\scalebox{0.7}{\includegraphics{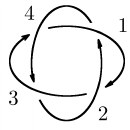}}},\
\raisebox{-0.275in}{\scalebox{0.7}{\includegraphics{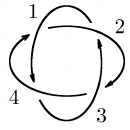}}},\
\raisebox{-0.275in}{\scalebox{0.7}{\includegraphics{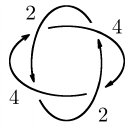}}},\
\raisebox{-0.275in}{\scalebox{0.7}{\includegraphics{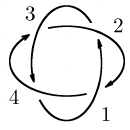}}},\
\raisebox{-0.275in}{\scalebox{0.7}{\includegraphics{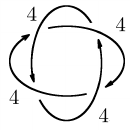}}}
\right\}.\]
\end{example}

Now, let $X$ be a finite quandle and $A$ an abelian group. Following 
\cite{CJKLS}, we recall:
\begin{itemize}
\item The module of \textit{rack $n$-chains} $C_n^R(X;A)=A[X^n]$ consists 
of all $A$-linear combinations of ordered $n$-tuples of elements of $X$,
\item The module of \textit{degenerate $n$-chains} $C_n^D(X;A)$ is the 
submodule generated by $n$-tuples of the form $(x_1,\dots,x_n)$ where 
$x_j=x_{j+1}$ for some $j$,
\item The module of \textit{quandle $n$-chains} 
$C_n^Q(X;A)=C_N^R(X;A)/C_n^D(X;A)$ is the quotient, 
\item The \textit{quandle boundary map} $\partial_n:C_n^Q(X;A)\to C_{n-1}^Q(X;A)$
is defined on generators by
\[\partial_n(x_1,\dots,x_n)=\sum_{k=1}^n 
(-1)^k((x_1,\dots, x_{k-1},x_{k+1},\dots, x_n)
-(x_1\tr x_k,\dots, x_{k-1}\tr x_k,x_{k+1},\dots, x_n))\]
and extended linearly (modulo degenerate chains),
\item The \textit{kth quandle homology} 
$H_K^Q(X;A)=\mathrm{Ker}\ \partial_k^Q/\mathrm{Im}\ \partial_{k+1}^Q$ is the 
quotient of the kernel of $\partial_k$ modulo the image of $\partial_{k+1}$, and
\item \textit{Quandle cohomology} is obtained from quandle homology by 
dualizing in the usual way. See \cite{CJKLS,EN} for more details.
\end{itemize}

An element of a quandle homset $\mathrm{Hom}(\mathcal{Q}(L),X)$ determines
an element $\vec{v}\in C_2^Q(X;\mathbb{Z})$ by interpreting each $X$-colored 
crossing as a signed ordered 2-tuple:
\[\includegraphics{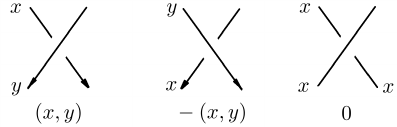}.\]

\begin{example}\label{Ex3}
Using the ordered basis 
$\{(1,2),(1,3),(1,4),(2,1),(2,3),(2,4),(3,1),(3,2),(3,4),(4,1),(4,2),(4,3)\}$ 
for $C_2^Q(\mathrm{Core}(\mathbb{Z}_4);\mathbb{Z})$ the homset in Example 
\ref{Ex2} determines the multiset
\[\{
4\times[ 0 \ 0 \ 0 \ 0 \ 0 \ 0 \ 0 \ 0 \ 0 \ 0 \ 0 \ 0 ]^T,\ 
4\times[ 1 \ 0 \ 0 \ 1 \ 0 \ 0 \ 0 \ 0 \ 1 \ 0 \ 0 \ 1 ]^T,\ 
4\times[ 0 \ 0 \ 1 \ 0 \ 1 \ 0 \ 0 \ 1 \ 0 \ 1 \ 0 \ 0 ]^T, \]\[
2\times[ 0 \ 2 \ 0 \ 0 \ 0 \ 0 \ 2 \ 0 \ 0 \ 0 \ 0 \ 0 ]^T,\
2\times[ 0 \ 0 \ 0 \ 0 \ 0 \ 2 \ 0 \ 0 \ 0 \ 0 \ 2 \ 0 ]^T 
\}.\]
\end{example}

We can evaluate $\phi\in C^2_Q(X;A)$ on a vector 
$\vec{v}\in C_2^Q(X;\mathbb{Z})$ to obtain an element 
$\langle \phi,\vec{v}\rangle\in A$ since $A$ is a $\mathbb{Z}$-module, i.e.,
for $a\in A$ and $n\in\mathbb{Z}$ we have
\[an=\left\{\begin{array}{cl}
\overbrace{a+a+\dots+a}^n & n>0 \\
0 & n=0 \\
\overbrace{-a-a-\dots-a}^{-n} & n<0 \end{array}\right..
 \]
The quandle boundary map is chosen so that evaluating a cocycle 
$\phi\in C^2_Q(X;A)$ on a vector $\vec{v}\in C_2^Q(X;\mathbb{Z})$ arising
from a quandle homset element yields a scalar $\phi(\vec{v})\in A$ called
a \textit{Boltzmann weight} which is unchanged by $X$-colored Reidemeister
moves. More precisely, we have the following standard result (see \cite{CJKLS})
which we include for completeness:

\begin{theorem}(Carter, Jelovsky, Kamada, Langford and Saito)
If $\vec{v},\vec{v'}\in C_2^Q(X;\mathbb{Z})$ represent $X$-colored diagrams
differing by a Reidemeister move and $\phi\in C^2_Q(X;A)$, then 
$\langle \phi,\vec{v}\rangle=\langle \phi,\vec{v'}\rangle$.
\end{theorem}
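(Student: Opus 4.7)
The plan is to verify the equality move-by-move for each of the oriented Reidemeister moves (I, II, III), since any two diagrams related by a finite sequence of such moves will then follow by induction on the length of that sequence. For each move I would write down the signed 2-chain contribution on each side using the convention that a positive crossing with underarc $x$ going under overarc $y$ contributes $+(x,y)$ and a negative crossing contributes $-(x,y)$, then show that the difference $\vec{v}-\vec{v'}$ either lies in the degenerate submodule, cancels as a signed sum, or is the image under $\partial_3$ of an explicit 3-chain. Since $\phi$ is a 2-cocycle, pairing against any such difference yields $0$ in $A$.

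First I would handle R1: a Reidemeister I move introduces (or removes) a single crossing at which the over- and under-arcs carry the same $X$-color $x$. Its contribution to $\vec{v}-\vec{v'}$ is $\pm(x,x)$, which lies in $C_2^D(X;\mathbb{Z})$ and is therefore zero in $C_2^Q(X;\mathbb{Z})$, so $\langle\phi,\vec{v}\rangle=\langle\phi,\vec{v'}\rangle$ regardless of whether $\phi$ is a cocycle — only the quandle quotient is used. Next I would handle R2: the two new crossings have opposite signs, and the quandle coloring rules force both to use the same pair of colors $(x,y)$, so their contributions are $+(x,y)$ and $-(x,y)$, which cancel in $C_2^Q(X;\mathbb{Z})$. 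Again only the chain-level identity is needed.

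The main work — and the only place where the cocycle condition enters — is R3. I would fix colorings of the two local diagrams so that the three strands involved carry colors $x$, $y$, $z$ (from bottom to top in the usual picture) and carefully compute the colors on all six arcs on both sides using the axiom $(x\tr y)\tr z=(x\tr z)\tr(y\tr z)$. Each side contributes three signed ordered pairs; the goal is to show that
\[
\vec{v}-\vec{v'} \;=\; \pm\,\partial_3(x,y,z)
\]
in $C_2^Q(X;\mathbb{Z})$, where $\partial_3$ is the quandle boundary recalled earlier. Writing out $\partial_3(x,y,z)$ explicitly and matching it term-by-term against the six pairs coming from the two sides of R3 (using the quandle operation to identify colors such as $x\tr y$, $(x\tr y)\tr z$, etc.) is a direct but sign-sensitive calculation. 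Once this identity is established, the cocycle condition $\langle\phi,\partial_3 c\rangle = \langle\delta\phi,c\rangle = 0$ for all $c\in C_3^Q(X;\mathbb{Z})$ gives $\langle\phi,\vec{v}\rangle=\langle\phi,\vec{v'}\rangle$.

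The main obstacle is precisely this R3 bookkeeping: keeping the signs of the crossings and the order of the pairs consistent with the chosen convention, and verifying that one indeed obtains $\partial_3(x,y,z)$ rather than some other 2-chain. I would also briefly note that for virtual knots one additionally checks the virtual and mixed Reidemeister moves, but since these do not change colors or introduce classical crossings they contribute nothing to $\vec{v}-\vec{v'}$ and the invariance is automatic.
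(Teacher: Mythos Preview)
Your proposal is correct and follows essentially the same route as the paper: a move-by-move verification in which R1 uses the degenerate quotient, R2 cancels as a signed sum, and R3 reduces to the 2-cocycle condition. The only cosmetic difference is that you phrase the R3 step as identifying $\vec{v}-\vec{v'}$ with $\pm\partial_3(x,y,z)$ and then invoking $\langle\phi,\partial_3 c\rangle=\langle\delta\phi,c\rangle=0$, whereas the paper writes out the six Boltzmann-weight terms on both sides and rearranges them directly into the cocycle relation; these are the same computation read dually.
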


\begin{proof}
Checking each of the Reidemeister moves, we see that the Boltzmann weight 
contribution is the same on both sides of the move. Here we will show one 
oriented case of each of the three Reidemeister moves; the reader is invited 
to verify the others.
\[\begin{array}{c}
\includegraphics{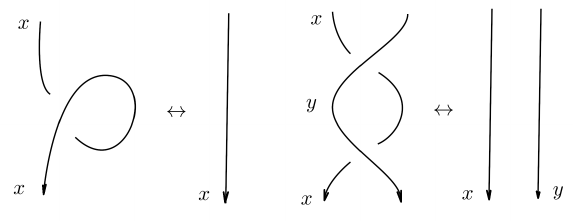} \\
\begin{array}{cc}
\phi(x,x)=0 \hspace{0.5in} 
& \hspace{0.5in} \phi(x,y)-\phi(x,y)=0 \end{array}\end{array}\]
\[\begin{array}{c}
\includegraphics{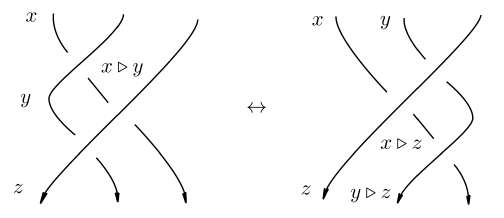}\\
\phi(x,y)+\phi(y,z)+\phi(x\tr y,z) = \phi(x,z)+\phi(y,z)+\phi(x\tr z, y\tr z) \\
\iff \phi(y,z)-\phi(y,z)-\phi(x,z)+\phi(x\tr y,z)+\phi(x,y)-\phi(x\tr z, y\tr z) =0\\
\end{array}\]
which is satisfied since $\phi\in H^2_Q(X;A)$.
\end{proof}

It follows that the multiset $M$ of Boltzmann weights for a choice 
of quandle 2-cocycle $\phi\in C^2_Q(X;A)$ is an invariant of oriented 
knots and links. For ease of comparison, it is standard to convert the 
multiset $M$ into a ``polynomial'' form 
\[\Phi_{\phi}(q)=\sum_{m\in M} q^m\]
generally known as the \textit{quandle 2-cocycle polynomial}. Evaluating 
$q=1$ then yields the quandle counting invariant 
$|\mathrm{Hom}(\mathcal{Q}(L),X)|$.
See \cite{CJKLS,EN} for more.

\begin{remark}
When $A=\mathbb{Z}$, $\Phi_{\phi}$ is a Laurent polynomial, while for other 
coefficient groups since the exponents are elements of $A$, the polynomial 
form should not be taken too literally as a polynomial but regarded as a 
convenient bookkeeping device for representing the multiset. For those 
distressed by this notation, we could instead use a complex or matrix 
representation for $A$, e.g., write $A=\mathbb{Z}_n$ multiplicatively as 
the $n$th roots of unity; alternatively, we could polynomialize $M$ by taking
$\Phi_{\phi}'(q)=\prod_{m\in M} (q-m)\in \mathbb{Z}[A,q]$ to obtain a polynomial
with $\mathbb{Z}[A]$ coefficients whose roots are the elements of $M$ and 
whose degree recovers the counting invariant.
\end{remark}


\begin{example}\label{Ex4}
In the dual of the basis from Example \ref{Ex3}, $H^2_Q(\mathrm{Core}(\mathbb{Z}_4);\mathbb{Z})$ has basis
\[\left\{
[ 1 \ 0 \ 1 \ 0 \ 0 \ 0 \ 0 \ 0 \ 0 \ 0 \ 0 \ 0 ],\
[ 0 \ 0 \ 0 \ 0 \ 0 \ 0 \ 0 \ 1 \ 1 \ 0 \ 0 \ 0 ]
\right\}.\]
Applying the first cocycle to the homset in Example \ref{Ex3} yields
quandle 2-cocycle invariant
\[M=\{8\times 0, 8\times 1\}\]
or in polynomial form, $8+8q.$
\end{example}

Let $X$ be a finite quandle and $\sigma\in\mathrm{Hom}(X,X)$ a quandle
endomorphism. As observed in \cite{CN}, applying $\sigma$ to the colors
on an $X$-colored oriented classical or virtual knot or link diagram 
results in another $X$-colored diagram:
\[\includegraphics{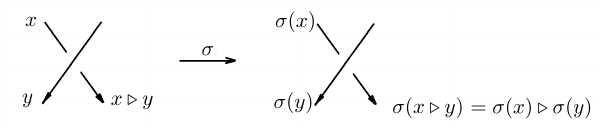}.\]

It then follows that assigning a vertex to each $X$-coloring in the 
quandle homset, each 
endomorphism determines a directed edge from each vertex to some other 
(possibly the same) vertex, resulting in a \textit{quiver} or directed 
graph called the \textit{quandle coloring quiver} associated to the 
knot or link $L$, denoted $QCQ_{X,S}(L)$. If $S=\mathrm{Hom}(X,X)$ we say
$QCQ_{X,S}(L)$ is the \textit{full quiver}.

\begin{example}
Let $X=\mathrm{Core}(\mathbb{Z}_3)$. Then writing endomorphisms
$\sigma:X\to X$ as vectors of images of elements of $X$, i.e.,
$\sigma=[\sigma(1),\sigma(2),\sigma(3)]$, 
choosing the set $S=\{[1,1,1],[2,3,1]\}\subset \mathrm{Hom}(X,X)$ we obtain
quandle coloring quiver for the trefoil knot
\[QCQ_{X,S}(3_1)=\raisebox{-0.75in}{\includegraphics{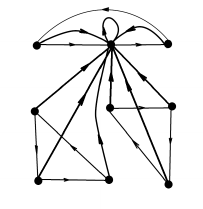}}.\]
\end{example}

\begin{theorem} (Cho., N. \cite{CN})
The quandle coloring quiver determined by a choice of quandle $X$ and
set $S\subseteq \mathrm{Hom}(X,X)$ of endomorphisms is an invariant
of oriented classical and virtual knots and links.
\end{theorem}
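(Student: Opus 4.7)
The plan is to reduce the statement to the Reidemeister invariance of the knot quandle $\mathcal{Q}(L)$ together with functoriality of $\mathrm{Hom}(-,X)$. First I would recall (as noted in the excerpt) that the vertices of $QCQ_{X,S}(L)$ are in natural bijection with the homset $\mathrm{Hom}(\mathcal{Q}(L),X)$, and that $\mathcal{Q}(L)$ is well-defined up to quandle isomorphism under Reidemeister moves; this is the same ingredient that makes the quandle counting invariant well-defined. Hence the vertex set of the quiver is a Reidemeister invariant from the outset.

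For the edges I would argue as follows. Fix $\sigma \in S$. Since $\sigma$ is a quandle endomorphism, post-composition $f \mapsto \sigma \circ f$ gives a well-defined self-map $\sigma_{\ast}$ of $\mathrm{Hom}(\mathcal{Q}(L),X)$, and the $\sigma$-labeled edges of $QCQ_{X,S}(L)$ are precisely the graph of $\sigma_{\ast}$. If $\psi: \mathcal{Q}(L) \to \mathcal{Q}(L')$ is the quandle isomorphism induced by a Reidemeister equivalence $L \sim L'$, then precomposition gives a bijection $\psi^{\ast}: \mathrm{Hom}(\mathcal{Q}(L'),X) \to \mathrm{Hom}(\mathcal{Q}(L),X)$ which tautologically satisfies $\psi^{\ast} \circ \sigma_{\ast} = \sigma_{\ast} \circ \psi^{\ast}$, so $\psi^{\ast}$ upgrades to a quiver isomorphism. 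Running this argument for every $\sigma \in S$ simultaneously gives the desired invariance.

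A purely diagrammatic alternative, closer in spirit to the pictures in the excerpt, would be to verify directly that for each Reidemeister move the bijection between $X$-colorings on the two sides commutes with the operation of applying $\sigma$ to every arc label. This reduces to the identity $\sigma(x \tr y) = \sigma(x) \tr \sigma(y)$, so valid $X$-colorings are mapped to valid $X$-colorings by $\sigma$, and the bijection is natural in the coloring. There is no substantive obstacle here: the theorem is essentially a direct corollary of Reidemeister invariance of $\mathcal{Q}(L)$ together with the observation that $\mathrm{Hom}(X,X)$ acts on $\mathrm{Hom}(\mathcal{Q}(L),X)$ by post-composition. The only ``step'' worth flagging is recognizing that the edges of $QCQ_{X,S}(L)$ are determined algebraically by the vertex set and $S$, so no further diagrammatic check is needed beyond the one already implicit in the counting-invariant case.
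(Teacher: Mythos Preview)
Your proposal is correct and matches the paper's approach. The paper gives precisely your ``diagrammatic alternative'': it checks that applying $\sigma$ to all arc colors commutes with each Reidemeister move (illustrated for RIII), which is exactly your observation that the bijection on $X$-colorings is natural with respect to post-composition by $\sigma$; your first, more functorial phrasing via $\psi^{\ast}\circ\sigma_{\ast}=\sigma_{\ast}\circ\psi^{\ast}$ is just the abstract restatement of that same commutative square.
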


\begin{proof}
This follows immediately from the fact that each $\sigma$ is an endomorphism
of $X$; changing the diagram by Reidemeister moves carries the quandle 
coloring along in a bijective way, and applying the same endomorphism $\sigma$
to a diagram before or after a move commutes with the move. For example, let 
us illustrate the case of the commutative square of a Reidemeister III move:
\[\scalebox{0.85}{\includegraphics{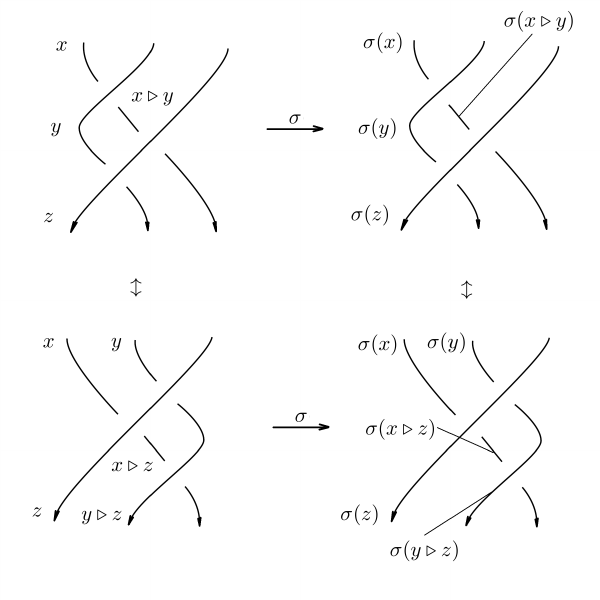}}.\]
The other moves are similar; see \cite{CN} for more details.
\end{proof}

\section{Quandle Cohomology Quivers and Representations}\label{QCQR}

In this section we introduce a new invariant of oriented classical knots
and links in the form of a quiver representation and use it to obtain
new enhancements of the quandle counting invariant.

\begin{definition}
Let $X$ be a finite quandle and $A$ an abelian group.
Each endomorphism $\sigma\in \mathrm{Hom}(X,X)$ induces a linear transformation
$\sigma:C_2^Q(X;\mathbb{Z})\to C_2^Q(X;\mathbb{Z})$ by setting
\[\sigma\left(\sum \alpha (x_j,x_k)\right)
=\sum \alpha (\sigma(x_j),\sigma(x_k)).\] 
\end{definition}

%

Next we define the key elements of our quiver representation.

\begin{definition}
Let $X$ be a finite quandle, $A$ an abelian group, $C$ a finite subset of 
$H^2_Q(X;A)$, $k$ a coefficient ring and $S$ a subset of the 
endomorphism set $\mathrm{Hom}(X,X)$.
Let $K$ be an oriented classical or virtual knot or link represented by
a diagram $D$. Then for each $X$-coloring of $D$ in 
$\mathrm{Hom}(\mathcal{Q}(D),X)$, the associated vector 
$\vec{v}\in C_2^Q(X;\mathbb{Z})$ pairs with each element $\phi\in C$ to yield
an element $\phi(\vec{v})\in A$. We then define $V_C(\vec{v})$ to be the 
subspace of $k[A]$ generated by 
$\{\phi(\vec{v})\ |\ \phi\in C\}\subset A$.
Moreover, for each $\phi\in C$, we define a linear transformation 
$f_{\sigma}:k[A]\to k[A]$ by first setting for each $\phi\in C$
\[f_{\sigma,\phi}(\phi(\vec{v}))=\phi(\sigma(\vec{v}))\]
and extending linearly, then setting
\[f_{\sigma}=\sum_{\phi\in C} f_{\sigma,\phi}.\]
\end{definition}

That is, each vertex gets a copy of $k[A]$ with a distinguished subspace 
$V_C(\vec{v})$ determined by the cocycle values on the coloring vector at 
the vertex; then each endomorphism $\sigma\in S$ determines a linear 
transformation $f_{\sigma}:k[A]\to k[A]$ sending distinguished subspaces 
to distinguished subspaces.

\begin{example}\label{Ex6}
Let $k=\mathbb{Z}$ and $A=\mathbb{Z}_3=\{0,1,2\}$  (which we might write 
as $\{\vec{0},\vec{1},\vec{2}\}$ when we think of the elements of $A$ as 
generating vectors for $k[A]$).
Let $X$ be the quandle defined by the operation table
\[\begin{array}{r|rrr}
\tr & 1 & 2 & 3 \\ \hline
1 & 1 & 1 & 2 \\
2 & 2 & 2 & 1 \\
3 & 3 & 3 & 3
\end{array};\]
then $H^2_Q(X;\mathbb{Z}_3)$ is generated by the vectors
\[C=\{[0\ 1\ 0\ 1\ 0\ 0],\ [0\ 0\ 1\ 0\ 0\ 0],\ [0\ 0\ 0\ 0\ 0\ 1]\}\]
written in the basis $\{(1,2),(1,3),(2,1),(2,3),(3,1),(3,2)\}$ for 
$C^2_Q(X;\mathbb{Z}_3)$. The quandle coloring quiver for the link 
$L4a1$ with endomorphism $\sigma=[2\ 2\ 1]$ includes the arrow
\[\includegraphics{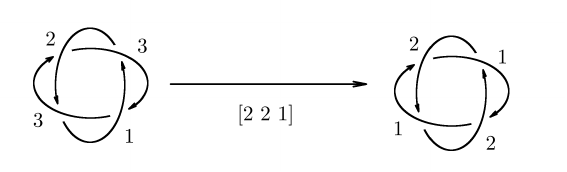}\]
from the $X$-coloring given by coloring vector $\vec{v}=[0\ 1\ 0\ 1\ 1\ 1]^T$ to
the $X$-coloring given by coloring vector 
$\sigma(\vec{v})=[2\ 0\ 2\ 0\ 0\ 0]^T$. Then applying the cocycles in $C$ 
to these coloring vectors yields linear maps
\[[0\ 1\ 0\ 1\ 0\ 0]\left[\begin{array}{r} 0\\1\\0\\1\\1\\1\end{array}\right]
\stackrel{\sigma}{\longrightarrow}
[0\ 1\ 0\ 1\ 0\ 0]\left[\begin{array}{r} 2\\0\\2\\0\\0\\0\end{array}\right]
\quad \Rightarrow \quad \vec{2}\mapsto\vec{0} \quad \Rightarrow 
\left[\begin{array}{rrr}
0 & 0 & 1 \\
0 & 0 & 0 \\
0 & 0 & 0 \\
\end{array}\right],
\]
\[[0\ 0\ 1\ 0\ 0\ 0]\left[\begin{array}{r} 0\\1\\0\\1\\1\\1\end{array}\right]
\stackrel{\sigma}{\longrightarrow}
[0\ 0\ 1\ 0\ 0\ 0]\left[\begin{array}{r} 2\\0\\2\\0\\0\\0\end{array}\right]
\quad \Rightarrow \quad \vec{0}\mapsto\vec{2} \quad \Rightarrow 
\left[\begin{array}{rrr}
0 & 0 & 0 \\
0 & 0 & 0 \\
1 & 0 & 0 \\
\end{array}\right]
\] and
\[[0\ 0\ 0\ 0\ 0\ 1]\left[\begin{array}{r} 0\\1\\0\\1\\1\\1\end{array}\right]
\stackrel{\sigma}{\longrightarrow}
[0\ 0\ 0\ 0\ 0\ 1]\left[\begin{array}{r} 2\\0\\2\\0\\0\\0\end{array}\right]
\quad \Rightarrow \quad \vec{1}\mapsto\vec{0} \quad \Rightarrow 
\left[\begin{array}{rrr}
0 & 1 & 0 \\
0 & 0 & 0 \\
0 & 0 & 0 \\
\end{array}\right].
\]
Thus we have 
$V_{C}(\vec{v})=\mathbb{Z}[\vec{0},\vec{1},\vec{2}]= \mathbb{Z}[\mathbb{Z}_3]$,
$V_{C}(\sigma(\vec{v}))=\mathbb{Z}[\vec{0},\vec{2}]\subset \mathbb{Z}[\mathbb{Z}_3]$ and
$f_{\sigma}:\mathbb{Z}[\mathbb{Z}_3]\to \mathbb{Z}[\mathbb{Z}_3]$ defined 
by the matrix
\[f_{\sigma}=
\left[\begin{array}{rrr}
0 & 1 & 1 \\
0 & 0 & 0 \\
1 & 0 & 0 \\
\end{array}\right].\]
\end{example}

We can now make our main new definition and state our main proposition.

\begin{definition}
Let $X$ be a finite quandle, 
$A$ an abelian group, 
$C\subset H^2_Q(X;A)$ a finite set of quandle $2$-cocycles representing 
quandle cohomology classes,
$L$ an oriented classical or virtual knot or link represented by a diagram $D$,
$k$ a coefficient ring and 
$S\subset \mathrm{Hom}(X,X)$ a set of quandle endomorphisms. 
We will refer to the 5-tuple $(X,A,C,k,S)$ as the \textit{quandle cohomology 
quiver representation data vector} or just the \textit{data vector}. 
Then we define the \textit{quandle cohomology quiver representation} of $K$ 
to be the quandle coloring quiver of $K$ with respect to $(X,S)$ with each 
vertex $\vec{v}$ weighted with the pair $(k[A],V_C(\vec{v}))$ and each edge 
defined by $\sigma\in S$ weighted with $f_{\sigma}$.
\end{definition}

Then we have:

\begin{proposition}\label{prop1}
The quandle cohomology quiver representation of $K$ is unchanged by
Reidemeister moves on $D$ and hence is an invariant of oriented classical 
and virtual knots and links. 
\end{proposition}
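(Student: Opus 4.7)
The plan is to reduce the proposition to the two invariance results that have already been established in the excerpt: the CJKLS theorem, which gives Boltzmann-weight invariance of $\langle \phi,\vec{v}\rangle$ under Reidemeister moves, and the Cho--Nelson theorem, which gives invariance of the underlying quandle coloring quiver $QCQ_{X,S}(L)$. Forgetting the vertex weights and edge weights leaves exactly $QCQ_{X,S}(L)$, so the combinatorial quiver is already invariant for free. What remains is to show that the extra label $(k[A],V_C(\vec{v}))$ at each vertex and the linear transformation $f_\sigma$ on each edge depend only on the quandle homomorphism indexing the vertex, not on the particular diagram used to produce the chain vector $\vec{v}$.

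For the vertex labels, I would fix a Reidemeister move carrying $D$ to $D'$ and let $c$ be an $X$-coloring of $D$ corresponding to a coloring $c'$ of $D'$ under the standard bijection of homsets. The associated chain vectors $\vec{v},\vec{v}'\in C_2^Q(X;\mathbb{Z})$ will generally differ, but the CJKLS theorem gives $\langle \phi,\vec{v}\rangle=\langle \phi,\vec{v}'\rangle$ for every $\phi\in C$. Since $V_C(\vec{v})$ is the $k$-span inside $k[A]$ of $\{\phi(\vec{v}):\phi\in C\}$, its generating set is literally identical after the move, and the ambient space $k[A]$ does not depend on the diagram at all, so the full vertex label is unchanged.

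For the edge labels I would combine two observations. First, applying $\sigma\in S$ commutes with Reidemeister moves: if $c\mapsto c'$, then $\sigma(c)\mapsto \sigma(c')$, by the same commutative-square argument used in the proof of the Cho--Nelson theorem. Consequently $\sigma(\vec{v})$ and $\sigma(\vec{v}')$ are chain vectors representing the same quandle homomorphism on diagrams differing by the move, so applying CJKLS to the coloring $\sigma(c)$ yields $\phi(\sigma(\vec{v}))=\phi(\sigma(\vec{v}'))$ for every $\phi\in C$. Since $f_{\sigma,\phi}$ is completely specified by its value on the single basis element $\phi(\vec{v})\in A\subset k[A]$ (with all other basis elements sent to $0$, as Example \ref{Ex6} illustrates), and both its input and output basis elements are Reidemeister-invariant, $f_{\sigma,\phi}$ is invariant. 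Summing over $\phi\in C$ gives invariance of $f_\sigma$.

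The main obstacle I anticipate is interpretive rather than mathematical: the phrase ``extending linearly'' in the definition of $f_{\sigma,\phi}$, combined with the sum defining $f_\sigma$, must be read so that distinct cocycles $\phi_1,\phi_2$ with $\phi_1(\vec{v})=\phi_2(\vec{v})$ but $\phi_1(\sigma(\vec{v}))\neq \phi_2(\sigma(\vec{v}))$ do not produce an ill-defined map. Example \ref{Ex6} fixes the intended reading: each $f_{\sigma,\phi}$ is the rank-at-most-one endomorphism of $k[A]$ sending the basis vector $\phi(\vec{v})$ to the basis vector $\phi(\sigma(\vec{v}))$ and every other basis vector to $0$, and $f_\sigma$ is the ordinary sum of these rank-one pieces inside $\mathrm{End}_k(k[A])$. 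Once this convention is made explicit, the Reidemeister invariance of each pair $(\phi(\vec{v}),\phi(\sigma(\vec{v})))$ is exactly what the proof needs, and no computation beyond the two cited theorems is required.
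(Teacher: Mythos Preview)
Your proposal is correct and follows essentially the same approach as the paper's own proof, which is a terse two-sentence argument invoking exactly the two ingredients you identify: the Cho--Nelson quiver isomorphism under Reidemeister moves and the CJKLS invariance of the Boltzmann weights $\langle\phi,\vec{v}\rangle$. Your version is more detailed---in particular, your explicit treatment of why $\phi(\sigma(\vec{v}))$ is also Reidemeister-invariant (via the commutativity of $\sigma$ with moves) and your careful disambiguation of the definition of $f_{\sigma,\phi}$ as a rank-at-most-one map go beyond what the paper spells out, but the underlying logic is identical.
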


\begin{proof}
Reidemeister moves on $D$ induce isomorphisms on the quandle coloring
quiver and do not change the cocycle values $\langle \phi,\vec{v}\rangle$. 
It then follows that vector spaces $(k[A],V_C(\vec{v}))$ and edge maps
$f_{\sigma}$ are also preserved by the quiver isomorphism.
\end{proof}

\begin{example}\label{Ex7}
Computing the rest of the quiver representation for the link in Example 
\ref{Ex6}, we get 
\[\includegraphics{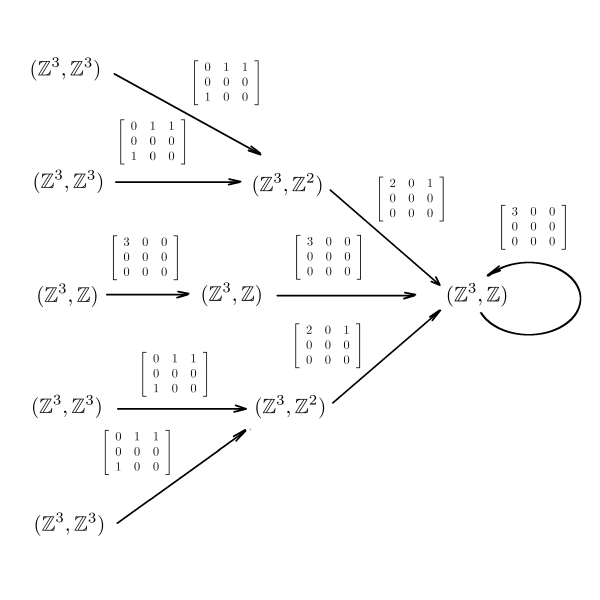}.\]
\end{example}

Quiver representations form the starting point for many knot and link
invariants such as Khovanov homology as well as many other structures
of interest. We anticipate exploring many such avenues in future work.
For the present, we will content ourselves with the following new 
families of invariants. 

When defining new knot invariants, it is often necessary to strike a
compromise between strength and ease of use. One reasonable choice for 
a polynomial invariant of
matrices is the characteristic polynomial $\chi(M)=\mathrm{det}(tI-M)$; 
while not completely determining $M$, it nonetheless retains 
most of the information about the linear transformation and is generally 
much more 
compact and easy to compare. Another convenient choice is to encode 
a square matrix $M$ as a \textit{matrix polynomial}
\[p_M(x,y)=\sum_{j,k\in A} M_{j,k}x^jy^k\]
where $A$ is a set of labels for the rows and columns of $M$. 
This polynomial actually determines the matrix, given the
row/column labels:
\[\left[\begin{array}{rrr}
1 & 0 & 2 \\
3 & 0 & 0 \\
0 & 0 & 2
\end{array}\right]
\leftrightarrow 
1x^0y^0+2x^0y^2+3x^1y^0+2x^2y^2
\]
assuming a label set of $A=\{0,1,2\}$. We caution
that depending on the choice of $A$, we should regard this ``polynomial''
as a bookkeeping device and not take it too literally as a polynomial
since the exponents are elements of $A$.

With these in mind, let us now define our new polynomial invariants.

\begin{definition}
Let $Q$ be a quandle cocycle quiver representation with
data vector $(X,A,C,k,S)$. We define the 
\textit{quandle cohomology quiver representation edge characteristic 
polyonomial} or just the \textit{edge characteristic polynomial} to be the sum
\[\Phi_{\chi}^E(t)=\sum_{e\in Q} \chi(f_{\sigma})\]
of the characteristic polynomials of the matrices $f_{\sigma}$
associated to each edge. We further define the
\textit{quandle cohomology quiver representation edge matrix
polyonomial} or just the \textit{edge matrix polynomial} to be the sum
\[\Phi_{p_M}^{E}(x,y)=\sum_{e\in Q} p_M(f_{\sigma})\]
of the matrix polynomials encoding the matrices representing $f_{\sigma}$
on each edge.
\end{definition}

\begin{example}
The quiver representation in Example \ref{Ex7} has edge characteristic 
polynomial $\Phi_{\chi}^E(t)=9t^3-13t^2-4t$ and edge 2-variable polynomial
$\Phi_{p_M}^E(x,y)=4x^2+6y^2+4y+13$.
\end{example}

These two polynomials include information about all of the matrices 
associated to the edges in $Q$, but they don't make explicit use of 
the overall quiver structure of $Q$. For this, let us consider the
set of paths in our quiver which are:
\begin{itemize}
\item maximal in the sense of not being a subpath of any other path, and
\item non-repeating, i.e. not repeating any edge.
\end{itemize}

\begin{remark}
We note that since every vertex has out-degree $|S|$, maximal paths
may start at a leaf but cannot end at one. Generically, a maximal 
non-repeating path starts at a leaf or cycle and connects to other cycles.
\end{remark}

\begin{definition}
Let $Q$ be a quandle cocycle quiver representation. For each maximal 
directed path $P_j=e_1e_2\dots e_k$ in $Q$ without repeated edges, in
the corresponding sequence of vector spaces and maps
\[V_1\stackrel{f_1}{\to}V_2\stackrel{f_2}{\to}\dots\stackrel{f_{k}}{\to} V_{k+1},
\]
let $M_j=f_{k}\dots f_2f_1$ be the the matrix product. Then we define the
\textit{quandle cohomology quiver representation maximal path 
characteristic polynomial} or just the \textit{path characteristic polynomial}
to be the sum 
\[\Phi_{\chi}^{P}=\sum_{j} \chi(M_j)s^{|j|}\]
of characteristic polynomials of $M_j$ times a variable 
$s$ to the power of the length of the path $j$ over the set of maximal
nonrepeating paths in $Q$, and we define the
\textit{quandle cohomology quiver representation maximal path 
matrix polynomial} or just the \textit{path matrix polynomial}
to be the sum
\[\Phi_{p_M}^P(x,y)=\sum_j p_M(M_j)z^{|j|}\] of products of $p_M$ values of $M_j$
times $z$ to the power of the length of the paths $j$ over the set of 
maximal nonrepeating paths in $Q$.
\end{definition}

We then have:
\begin{proposition}
The polynomials $\Phi_{\chi}^E,\ \Phi_{\chi}^P,\Phi_{p_M}^E$ and $\Phi_{p_M}^P$ 
are invariants of classical and virtual oriented knots
and links for every choice of finite quandle $X$, abelian group $A$, 
set of quandle cocycles $C\subset H^2_Q(x;A)$, coefficient ring $k$ 
and set of quandle endomorphisms $S\subset \mathrm{Hom}(X,X)$.
\end{proposition}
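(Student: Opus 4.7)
The plan is to bootstrap off Proposition \ref{prop1}: once we know that the full quandle cohomology quiver representation is preserved by Reidemeister moves up to the natural quiver isomorphism induced by the move, it suffices to verify that each of the four polynomials is a function of the isomorphism class of the quiver representation. Throughout, I would lean on the fact already established (in the Carter--Jelovsky--Kamada--Langford--Saito theorem and in the proof of Proposition \ref{prop1}) that a Reidemeister move induces a bijection on $X$-colorings that preserves every cocycle value $\langle\phi,\vec{v}\rangle$ for $\phi\in C$, and hence preserves each linear map $f_\sigma$ on the nose, not merely up to conjugation.

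First I would handle the edge polynomials $\Phi_\chi^E$ and $\Phi_{p_M}^E$. The critical observation is that every vertex vector space is $k[A]$ equipped with its canonical basis indexed by the elements of $A$, so the matrix representing $f_\sigma$ on an edge is computed in a basis intrinsic to the data vector rather than one chosen after the move. Under the vertex bijection induced by a Reidemeister move, corresponding edges carry identical matrices for $f_\sigma$, since the defining data $\phi(\vec{v})$ and $\phi(\sigma(\vec{v}))$ are preserved. Consequently both $\chi(f_\sigma)$ and $p_M(f_\sigma)$ agree edgewise, and summing over the bijectively matched edge sets gives equality of $\Phi_\chi^E$ and $\Phi_{p_M}^E$ on the two diagrams.

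Next I would treat the path polynomials $\Phi_\chi^P$ and $\Phi_{p_M}^P$. A quiver isomorphism automatically induces a length-preserving bijection between directed paths, and this bijection restricts to one between maximal non-repeating paths, since ``maximal'' and ``non-repeating'' are purely combinatorial properties of the underlying quiver. Along corresponding paths, the sequences of edge maps $f_1,\dots,f_k$ are identical by the preceding paragraph, so the products $M_j=f_k\cdots f_1$ agree as matrices. Therefore $\chi(M_j)$ and $p_M(M_j)$ coincide on matched paths, the exponent $|j|$ of $s$ (respectively $z$) is preserved, and summing over the matched sets yields equality of $\Phi_\chi^P$ and $\Phi_{p_M}^P$.

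The most delicate point, and the one I expect to be the main obstacle, is bookkeeping for $p_M$: the matrix polynomial is in general basis-dependent, so invariance hinges on the canonical nature of the basis $A\subset k[A]$ used throughout. As long as the construction is phrased so that every vertex uses this same canonical basis (and the edge maps are stored as matrices in that basis), the argument goes through cleanly; a sloppy formulation that only specifies the vector space up to isomorphism would make $\Phi_{p_M}^E$ and $\Phi_{p_M}^P$ ill-defined. Once this canonicity is made explicit, all four polynomials are invariants of the quiver representation, and hence by Proposition \ref{prop1} invariants of the oriented knot or link.
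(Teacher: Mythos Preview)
Your proposal is correct and follows essentially the same approach as the paper's proof, which simply notes that a Reidemeister move induces a quiver isomorphism preserving the vector spaces $(k[A],V_C(\vec{v}))$ and the matrices $f_\sigma$, so the four polynomials are unchanged. You spell out the details more carefully than the paper does---in particular your observation that $p_M$ is well-defined only because every vertex space carries the canonical basis $A\subset k[A]$, and your explicit verification that quiver isomorphisms restrict to length-preserving bijections on maximal non-repeating paths---but these are elaborations of the same argument rather than a different route.
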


\begin{proof}
Changing the diagram by a Reidemeister move induces a quiver isomorphism
preserving the vector spaces $(k[A],V_C(\vec{v}))$ and the matrices 
$f_{\sigma}$. Hence, $\Phi_{\chi}^E,\ \Phi_{\chi}^P,\Phi_{p_M}^E$ and $\Phi_{p_M}^P$ 
are unchanged by Reidemeister moves and thus are knot invariants.
\end{proof}

\begin{example}
In the quiver representation in Example \ref{Ex7}, there are five maximal
paths without repeating edges, each starting from a leaf and ending at the 
loop. Then four of the edges have matrix product
\[
\left[\begin{array}{rrr}
3 & 0 & 0 \\
0 & 0 & 0 \\
0 & 0 & 0
\end{array}\right]
\left[\begin{array}{rrr}
2 & 0 & 1 \\
0 & 0 & 0 \\
0 & 0 & 0
\end{array}\right]
\left[\begin{array}{rrr}
0 & 1 & 1 \\
0 & 0 & 0 \\
1 & 0 & 0
\end{array}\right]
=
\left[\begin{array}{rrr}
3 & 0 & 0 \\
0 & 0 & 0 \\
0 & 0 & 0
\end{array}\right]
\left[\begin{array}{rrr}
1 & 2 & 2 \\
0 & 0 & 0 \\
0 & 0 & 0
\end{array}\right]
=
\left[\begin{array}{rrr}
3 & 6 & 6 \\
0 & 0 & 0 \\
0 & 0 & 0
\end{array}\right]
\]
and the other has matrix
\[
\left[\begin{array}{rrr}
3 & 0 & 0 \\
0 & 0 & 0 \\
0 & 0 & 0
\end{array}\right]^3
=\left[\begin{array}{rrr}
27 & 0 & 0 \\
0 & 0 & 0 \\
0 & 0 & 0
\end{array}\right].
\]
All maximal non-repeating paths in this quiver have length 3.
Then the path characteristic polynomial is $\Phi_{\chi}^P(t)=5s^3t^3-39s^3t^2$
and the path matrix polynomial is $\Phi_{p_M}^P(x,y)=24x^2z^3+24xz^3+39z^3$.
\end{example}

\begin{example}
If the set $S$ consists only of the identity endomorphism
and the set $C$ of cocycles consists of a single cocycle $\phi$,
then each path matrix has a single 1 in the row and column corresponding to
the basis element $\phi(\vec{v})$ of $k[A]$. Then the path matrix polynomial
equals the edge polynomial after evaluating at $z=1$ and setting $xy=q$ yields
the original quandle 2-cocycle invariant; hence, this family of invariants 
includes the quandle 2-cocycle invariants as a special case. If we further 
choose the cocycle to be a coboundary and $k=\mathbb{Z}$, then (interpreting 
$(xy)^0=1$) the polynomial is an integer, more precisely the number of quandle 
colorings of $L$, i.e., the quandle counting invariant. We further note that 
if $C$ includes several quandle cocycles and $S=\{\mathrm{Id}_X\}$ then the 
common value of the edge and path polynomials with $z=1$ is a kind of 
``multi-2-cocycle'' invariant similar to the invariant described in \cite{KV}.
\end{example}

\section{Generalization to Biquandles}\label{B}

In this section we generalize the construction from the previous section
to the case of biquandles. This has the advantage of giving us more 
invariants with smaller operation tables for quicker computation as well
as stronger invariants for the case of virtual knots and links. See \cite{EN}
for more details.

\begin{definition}
A \textit{biquandle} is a set $X$ with two binary operations
$\utr, \otr: X\times X\to X$ satisfying the axioms
\begin{itemize}
\item[(i)] For all $x\in X$, $x\utr x=x\otr x$,
\item[(ii)] For all $y\in X$ the maps $\alpha_y,\beta_y:X\to X$ 
and the map $S:X\times X\to X\times X$ defined by $\alpha_y(x)=x\otr y$,
$\beta_y(x)=x\utr y$ and $S(x,y)=(y\otr x,x\utr y)$ are invertible and
\item[(iii)] For all $x,y,z\in X$ we have the \textit{exchange laws}
\[\begin{array}{rcl}
(x\utr y)\utr(z\utr y) & = & (x\utr z)\utr(y\otr z) \\
(x\utr y)\otr(z\utr y) & = & (x\otr z)\utr(y\otr z) \\
(x\otr y)\otr(z\otr y) & = & (x\otr z)\otr(y\utr z).
\end{array}\]
\end{itemize}
\end{definition}

\begin{example}
Standard examples of biquandles include
\begin{itemize}
\item \textit{Quandles} are biquandles with $x\otr y=x$ for all $x,y\in X$,
\item \textit{Alexander biquandles} are modules over 
$\mathbb{Z}[t^{\pm 1},s^{\pm 1}]$ with $x\utr y=tx+(s-t)y$ and $x\otr y=sx$,
\item Groups are biquandles under the operations $x\utr y= y^{-1}xy^{-1}$ and
$x\otr y=x^{-1}$ and
\item Oriented classical and virtual knots and links have an associated
\textit{fundamental biquandle} or \textit{knot biquandle} defined via a 
presentation from any diagram of the knot or link. See \cite{EN} for more.
\end{itemize}
\end{example}

\begin{definition}
A map $f:X\to Y$ between biquandles such that for all $x,y\in X$ we have
\[f(x\utr y)= f(x)\utr f(y) \quad \mathrm{and}\quad f(x\otr y)
= f(x)\otr f(y)\]
is a \textit{biquandle homomorphism}; a biquandle homomorphism 
$\sigma:X\to X$ is a \textit{biquandle endomorphism}.
\end{definition}

Biquandle (co)homology is defined analogously to quandle (co)homology with
the biquandle boundary map given by
\[\partial_n(x_1,\dots,x_n)=\sum_{k=1}^n 
(-1)^k((x_1,\dots, x_{k-1},x_{k+1},\dots, x_n)
-(x_1\utr x_k,\dots, x_{k-1}\utr x_k,x_{k+1}\otr x_k,\dots, x_n\otr x_k)).\]

The coloring rule for biquandle colorings of oriented link diagrams is 
(ignoring virtual crossings)
\[\includegraphics{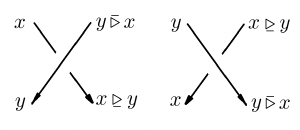}\]
where we color \textit{semiarcs} divided at both over- and under-crossings,
and translating $X$-colored diagrams to $2$-chains uses 
\[\includegraphics{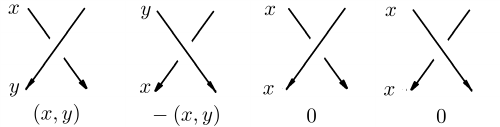}.\]

\begin{example}
The smallest nontrivial biquandle is $\mathbb{Z}_2=\{1,2\}$ (writing the 
class of zero as $2$ so our entries match our row/column numbers) with 
operations $x\utr y=x\otr y=x+1$. Then both $\partial_3$ and $\partial_2$
are zero, so $H^2_Q(X;A)$ is generated by $\{[1\ 0],[0\ 1]\}$
when written in the ordered basis $\{(1,2),(2,1)\}$ for $C^2_Q(X;A)$.
\end{example}

The quandle cocycle quiver representation and its various polynomial 
decategorifications from Section \ref{QCQR} extend to the biquandle case
by replacing quandles and quandle homomorphisms with biquandles and biquandle
homomorphisms; indeed, the former is a special case of the latter.

\begin{remark}
It was shown in \cite{J} that the knot quandle is a complete invariant of
classical knots up to ambient homeomorphism, and the knot biquandle is 
easily seen to be unchanged by reversed orientation mirror image; it follows
that for classical knots, the knot quandle already contains the same 
information as the knot biquandle. However, for virtual knots and links, the
knot biquandle is known to be stronger than the knot quandle, and in any case
the same information presented in different forms can have different levels
of accessibility -- it may be more computationally efficient to use quandles
in some cases and biquandles in others, even for classical knots and links.
\end{remark}

\section{Examples and Computations}\label{EC}

In this Section we collect some examples and computations.

\begin{example}
The link in $L4a1$ with respect to the data vector consisting of quandle 
$X=\mathrm{Core}(\mathbb{Z}_4)$, abelian group $A=\mathbb{Z}_3$, cocycles 
\[\left\{
[ 1 \ 0 \ 1 \ 0 \ 0 \ 0 \ 0 \ 0 \ 0 \ 0 \ 0 \ 0 ],\
[ 0 \ 0 \ 0 \ 0 \ 0 \ 0 \ 0 \ 1 \ 1 \ 0 \ 0 \ 0 ]
\right\}\]
in $H^2_Q(X;\mathbb{Z}_3)$, coefficient ring $k=\mathbb{Z}$ and
endomorphism set $S=\{[2,4,2,4],[1,1,1,1]\}$ has (via \texttt{python} 
computation) edge and path characteristic and matrix polynomials
\[\begin{array}{rcl}
\Phi_{\chi}^E & = & 32t^3-30t^2 \\
\Phi_{p_M}^E & = & 3y+32 \\
\Phi_{\chi}^P & = & 60s^7t^3 - 1536s^7t^2 + 22s^6t^3 - 384s^6t^2 + 16s^5t^3 + 22s^4t^3 - 96s^4t^2 \\
\Phi_{p_M}^P & = & 6144xz^7 + 1024xz^6 + 512xz^5 + 256xz^4 + 1536z^7 + 384z^6 + 96z^4
\end{array}.\]
\end{example}

\begin{example}\label{ExL}
Let $X=\mathbb{Z}_2$ with $x\utr y=x\otr y=x+1$, $A=\mathbb{Z}_3$,
$C=\{[1\ 0],[0\ 1]\}\subset H_Q^2(X;A)$ with respect to the 
ordered basis $\{(1,2),(2,1)\}$, $k=\mathbb{Z}$ and
$S=\mathrm{Hom}(X,X)=\{[1,2],[2,1]\}$. Then the virtual
knot $2.1$ has (like all classical and virtual knots) two $X$-colorings,
both represented by the biquandle 2-chain $\{[1\ 1]^T\}$
resulting in biquandle coloring quiver 
\[\includegraphics{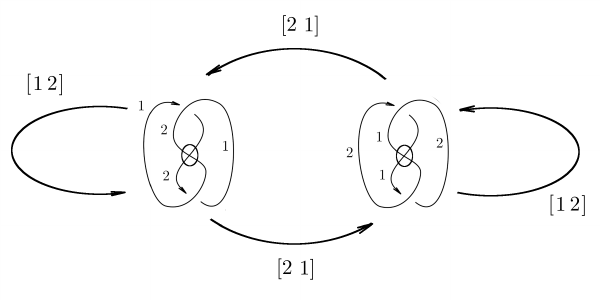}\]
with distinguished subspace $\mathbb{Z}[\vec{1}]\subset\mathbb{Z}^3$
at each vertex and quiver representation
\[\includegraphics{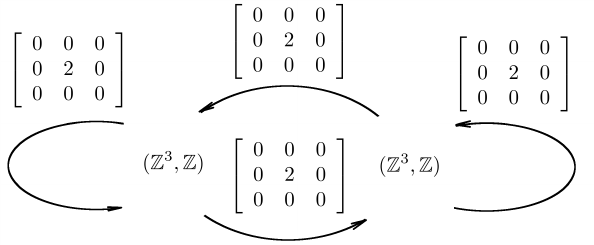}.\]
All maximal non-repeating paths in this quiver have length 4.
We then have invariant values 
\[\begin{array}{rclrcl}
\Phi_{\chi}^E(2.1) & = & 4t^3-8t^2 & \Phi_{p_M}^E(2.1) & = & 8xy \\
\Phi_{\chi}^P(2.1) & = & 4s^4t^3-64s^4t^2& \Phi_{p_M}^P(2.1) & = & 64xyz^4 
\end{array}.\]
The mirror image of 2.1 has quiver representation
\[\includegraphics{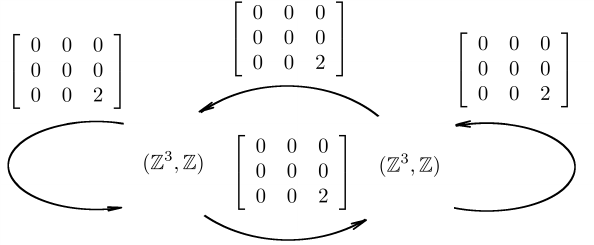}\]
\[\begin{array}{rclrcl}
\Phi_{\chi}^E(2.1) & = & 4t^3-8t^2 & \Phi_{p_M}^E(2.1) & = & 8x^2y^2 \\
\Phi_{\chi}^P(2.1) & = & 4s^4t^3-64s^4t^2 & \Phi_{p_M}^P(2.1) & = & 64x^2y^2z^4 
\end{array}\]
showing that these invariants are sensitive to (horizontal) mirror image.
\end{example}

\begin{example}
Using \texttt{python} code, we compute the following values for the edge
characteristic and matrix polynomials for the virtual knots with up to 
4 classical crossings with a choice of orientation in the table at \cite{KA} 
using the $(X,A,C,k,S)$ data vector from Example \ref{ExL}. 
\[
\begin{array}{r|l}
\Phi_{p_M}^E(L) & L \\\hline
64z^4 & 3.1, 3.5, 3.6, 3.7, 4.2, 4.6, 4.8, 4.10, 4.12, 4.13, 4.16, 4.17, 4.19, 4.21, 4.23, 4.24, 4.26, 4.31, 4.32, 4.35, \\
 & 4.36, 4.41, 4.42, 4.46, 4.47, 4.50, 4.51, 4.55, 4.56, 4.57, 4.58, 4.59, 4.65, 4.66, 4.67, 4.68, 4.70, 4.71, \\
& 4.72, 4.75, 4.76, 4.77, 4.79, 4.85, 4.86, 4.89, 4.90, 4.93, 4.96, 4.97, 4.98, 4.99, 4.102, 4.103, 4.105, \\ 
& 4.106, 4.107, 4.108 \\
64xyz^4 & 2.1, 4.1, 4.3, 4.7, 4.25, 4.28, 4.43, 4.53, 4.73, 4.80, 4.84, 4.88, 4.91, 4.100, 4.104 \\
64x^2y^2z^4 & 3.2, 3.3, 3.4, 4.4, 4.5, 4.9, 4.11, 4.14, 4.15, 4.18, 4.20, 4.22, 4.27, 4.29, 4.30, 4.33, 4.34, 4.37, 4.38, \\
& 4.39, 4.40, 4.44, 4.45, 4.48, 4.49, 4.52, 4.54, 4.60, 4.61, 4.62, 4.63, 4.64, 4.69, 4.74, 4.78, 4.81, 4.82, \\
& 4.83, 4.87, 4.92, 4.94, 4.95, 4.101 \\
\end{array}\]
\end{example}

\begin{example}
Computing all four polynomials for a choice of orientation for the 
prime classical links with up to 7 crossings as found in \cite{KA}  
with the choice of $(X,A,C,k,S)$ data vector
from Examples \ref{Ex6}-\ref{Ex7} using \texttt{python} code, we obtain the 
tables
\[\scalebox{1.0}{$
\begin{array}{r|ll}
L & \Phi_{\chi}^E & \Phi_{p_M}^E \\ \hline
L2a1 & 5t^3 - 13t^2 & 2y + 13 \\
L4a1 & 9t^3 - 13t^2 - 4t & 4x^2 + 6y^2 + 4y + 13 \\
L5a1 & 9t^3 - 24t^2 & 2y^2 + y + 24 \\
L6a1 & 9t^3 - 15t^2 - 2t & 2x^2y^2 + 2x^2 + 6y^2 + 4y + 13 \\
L6a2 & 5t^3 - 15t^2 & 15 \\
L6a3 & 5t^3 - 15t^2 & 15 \\
L6a4 & 27t^3 - 69t^2 & 6y^2 + 6y + 69 \\
L6a5 & 15t^3 - 21t^2 - 6t & 6x^2 + 12y^2 + 6y + 21 \\
L6n1 & 15t^3 - 24t^2 - 9t & 6x^2 + 15y^2 + 24 \\
L7a1 & 9t^3 - 24t^2 & y^2 + 2y + 24 \\
L7a2 & 9t^3 - 13t^2 - 2t & 2x^2y + 2x^2 + 6y^2 + 4y + 13 \\
L7a3 & 9t^3 - 23t^2 & 2y^2 + 2y + 23 \\
L7a4 & 9t^3 - 23t^2 & 2y^2 + 2y + 23 \\
L7a5 & 5t^3 - 13t^2 & 2y + 13 \\
L7a6 & 5t^3 - 13t^2 & 2y + 13 \\
L7a7 & 15t^3 - 34t^2 - 2t & 2x^2 + 6y^2 + 3y + 34 \\
L7n1 & 9t^3 - 15t^2 - 3t & xy^2 + xy + 2x + 2y^2 + 7y + 14 \\
L7n2 & 9t^3 - 25t^2 & 2y + 25 
\end{array}$}\] 
and
\[\scalebox{1.0}{$\begin{array}{r|ll}
L & \Phi_{\chi}^P & \Phi_{p_M}^P  \\ \hline
L2a1 & s^3t^3 - 27s^3t^2 + 2s^2t^3 - 12s^2t^2 & 6xz^2 + 27z^3 + 12z^2 \\
L4a1 & 5s^3t^3 - 39s^3t^2 & 24x^2z^3 + 24xz^3 + 39z^3 \\
L5a1 & 5s^3t^3 - 108s^3t^2 & 18x^2z^3 + 9xz^3 + 108z^3 \\ 
L6a1 & 5s^3t^3 - 33s^3t^2 & 30x^2z^3 + 24xz^3 + 33z^3\\
L6a2 & s^3t^3 - 27s^3t^2 + 2s^2t^3 - 18s^2t^2 & 27z^3 + 18z^2\\ 
L6a3 & s^3t^3 - 27s^3t^2 + 2s^2t^3 - 18s^2t^2 & 27z^3 + 18z^2 \\
L6a4 & 19s^3t^3 - 405s^3t^2 & 54x^2z^3 + 54xz^3 + 405z^3 \\
L6a5 & 7s^3t^3 - 45s^3t^2 + 3s^2t^3 - 18s^2t^2 & 36x^2z^3 + 9x^2z^2 + 36xz^3 + 45z^3 + 18z^2 \\ 
L6n1 & 7s^3t^3 - 63s^3t^2 + 3s^2t^3 - 18s^2t^2 & 54x^2z^3 + 9x^2z^2 + 63z^3 + 18z^2\\
L7a1 & 5s^3t^3 - 108s^3t^2 & 9x^2z^3 + 18xz^3 + 108z^3 \\ 
L7a2 & 5s^3t^3 - 33s^3t^2 & 24x^2z^3 + 30xz^3 + 33z^3\\ 
L7a3 & 5s^3t^3 - 99s^3t^2 & 18x^2z^3 + 18xz^3 + 99z^3\\ 
L7a4 & 5s^3t^3 - 99s^3t^2 & 18x^2z^3 + 18xz^3 + 99z^3\\
L7a5 & s^3t^3 - 27s^3t^2 + 2s^2t^3 - 12s^2t^2 & 6xz^2 + 27z^3 + 12z^2\\ 
L7a6 & s^3t^3 - 27s^3t^2 + 2s^2t^3 - 12s^2t^2 & 6xz^2 + 27z^3 + 12z^2\\ 
L7a7 & 7s^3t^3 - 114s^3t^2 + 3s^2t^3 - 24s^2t^2 & 30x^2z^3 + 3x^2z^2 + 21xz^3 + 114z^3 + 24z^2 \\
L7n1 & 5s^3t^3 - 39s^3t^2 & 15x^2z^3 + 33xz^3 + 39z^3 \\
L7n2 & 5s^3t^3 - 117s^3t^2 & 18xz^3 + 117z^3\\
\end{array}$.}\] 

\end{example}

\section{Questions}\label{Q}

We end with some questions and directions for future research. 

In defining knot invariants, there is often a tradeoff between strength 
and usability. We have attempted to strike a good balance by passing from
matrices to characteristic polynomials which retain much of the structure 
of the matrices while being easier to write and visually compare and with
matrix ``polynomials'' which actually encode the matrix using
row and column labels as exponents. What other decategorifications are 
possible? What are the geometric meanings of these invariants?

More generally, what kinds of new invariant structures can be built on
this foundation? Homology theories, functorial invariants, exact sequences?

We anticipate many future projects building on this and similar definitions,
including quiver representations for other homset structures, different
constructions of quiver representations and further invariants to be derived
from them.

\bibliography{sn-solo24}{}
\bibliographystyle{abbrv}

\bigskip

\noindent
\textsc{Department of Mathematical Sciences \\
Claremont McKenna College \\
850 Columbia Ave. \\
Claremont,  CA 91711}

\end{document}